\newtheorem{theorem}{Theorem}
\newtheorem{lemma}{Lemma}
\newtheorem{definition}{Definition}
\newtheorem{remark}{Remark}
\newtheorem{corollary}{Corollary}
\newtheorem{proposition}{Proposition}
\def\@pacs{}
\def\pacs#1{}
\def\preprint#1{}
\renewcommand\@pacs@name{}
\begin{document}

\title{{\Large Discrete Spectrum and Spectral Rigidity of a Second-Order \\  Geometric Deformation Operator}}

\author{Anton Alexa}
\email{mail@antonalexa.com}
\thanks{ORCID: \href{https://orcid.org/0009-0007-0014-2446}{0000-0007-0014-2446}}
\affiliation{Independent Researcher, Chernivtsi, Ukraine}
\date{\today}

\begin{abstract}
\noindent
We analyze the spectral properties of a self-adjoint second-order differential operator \( \hat{C} \), defined on the Hilbert space \( L^2([-v_c, v_c]) \) with Dirichlet boundary conditions. We derive the discrete spectrum \( \{C_n\} \), prove the completeness of the associated eigenfunctions, and establish orthogonality and normalization relations. The analysis follows the classical Sturm--Liouville framework and confirms that the deformation modes \( C_n \) form a spectral basis on the compact interval. We further establish a spectral rigidity result: uniform spectral coefficients imply a constant profile \( C(v) = \pi \), which does not belong to the Sobolev domain of the operator. These results provide a rigorous foundation for further investigations in spectral geometry and functional analysis.
\end{abstract}

\maketitle
\thispagestyle{empty}

\section{Introduction}

The deformation function \( C(v) = \pi(1 - v^2 / c^2) \), introduced in \cite{alexa2025-flow}, defines a real-analytic, even profile on the open interval \( (-c, c) \), modeling scalar geometric deformation associated with relativistic motion. The critical velocity \( v_c \) is defined by the condition \( C(v_c) = 1 \), which yields \( v_c = c \sqrt{1 - \tfrac{1}{\pi}} \approx 0.8257\,c \). To ensure geometric regularity and spectral admissibility, we restrict the analysis to the compact interval \( [-v_c, v_c] \), where \( C(v) \geq 1 \). In \cite{alexa2025-operator}, a second-order differential operator \( \hat{C} := \pi\left(1 + \frac{\hbar^2}{c^2} \frac{d^2}{dv^2} \right) \) was constructed to represent this deformation analytically, acting on the Hilbert space \( L^2([-v_c, v_c]) \) with Dirichlet boundary conditions. The operator was shown to be essentially self-adjoint via direct computation of von Neumann deficiency indices, and its domain identified with the Sobolev space \( H^2 \cap H^1_0 \). 

In this work, we proceed to analyze the spectrum of \( \hat{C} \), explicitly determining its eigenvalues and eigenfunctions, and proving their completeness in \( L^2([-v_c, v_c]) \). We establish orthogonality, derive refined spectral asymptotics, and confirm that the deformation modes \( \psi_n \) form a full spectral basis on the compact interval. We further prove an inverse-limit convergence result for spectral reconstructions and establish a spectral rigidity theorem: the uniform configuration \( C(v) = \pi \) corresponds to constant spectral coefficients but lies outside the operator domain. These results provide a rigorous spectral resolution of the deformation operator and form the analytic basis for further investigations in geometric and spectral theory.

Each spectral pair \( (\psi_n, C_n) \) defines a discrete deformation mode, allowing a spectral representation of the geometric profile. This interpretation connects the spectral structure of the operator to geometric variation, in analogy with Laplacian-based spectral geometry. Unlike classical Laplacians, however, the operator \( \hat{C} \) encodes deformation relative to a fixed maximal reference \( \pi \), producing a bounded spectrum and a sharp spectral rigidity. These structural properties highlight the role of \( \hat{C} \) as a nontrivial generator of geometric modes, distinct from standard elliptic operators and intrinsic to deformation-based spectral theory.

\section{Spectral Problem Formulation}

The spectral structure of the operator \( \hat{C} \), constructed from the deformation function \( C(v) = \pi(1 - v^2 / c^2) \), arises naturally from its second-order differential form and compact domain. Defined on the Sobolev space \( H^2([-v_c, v_c]) \cap H^1_0([-v_c, v_c]) \), this operator admits a well-posed eigenvalue problem under Dirichlet conditions. The regularity of the setting implies discrete, real spectrum with a complete system of eigenfunctions. Essential self-adjointness of \( \hat{C} \) was previously established in \cite{alexa2025-operator}, providing the foundation for the spectral resolution developed in this section.

\subsection{Eigenvalue Problem and Operator Structure}

We consider the spectral problem for the self-adjoint second-order differential operator defined by:
\begin{equation} \label{eq:C_operator}
\hat{C} := \pi\left(1 + \frac{\hbar^2}{c^2} \frac{d^2}{dv^2} \right),
\end{equation}
acting in the Hilbert space \( L^2([-v_c, v_c]) \). The domain of the operator is taken as the Sobolev space
\begin{equation}
D(\hat{C}) := H^2([-v_c, v_c]) \cap H^1_0([-v_c, v_c]),
\end{equation}
which encodes square-integrability, weak differentiability up to second order, and vanishing Dirichlet boundary conditions.

\begin{definition}
The eigenvalue problem associated with \( \hat{C} \) is given by:
\begin{equation} \label{eq:eigenvalue_problem}
\hat{C} \psi_n = C_n \psi_n,
\end{equation}
where \( \psi_n \in D(\hat{C}) \) and \( C_n \in \mathbb{R} \) denotes the eigenvalue corresponding to the eigenfunction \( \psi_n \).
\end{definition}

\subsection{Dirichlet Boundary Conditions and Regularity}

\begin{remark}
The interval \( [-v_c, v_c] \) is compact and symmetric around the origin. The boundary conditions imposed are:
\begin{equation}
\psi_n(-v_c) = \psi_n(v_c) = 0.
\end{equation}
These ensure that \( \hat{C} \) is a regular Sturm--Liouville operator, and the problem is self-adjoint by construction.
\end{remark}

\begin{lemma}[Symmetry and Ellipticity]
The operator \( \hat{C} \) defined in \eqref{eq:C_operator} is a uniformly elliptic, symmetric second-order operator with smooth coefficients. On a compact domain with Dirichlet boundary conditions, the spectrum is real, simple, and unbounded below, with strict uniform upper bound \( C_n < \pi \) for all \( n \).
\end{lemma}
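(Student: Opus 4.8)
The plan is to reduce every assertion to an explicit solution of the eigenvalue ODE, since the operator has constant coefficients and the domain is a single compact interval. Writing $\hat{C}\psi = C\psi$ out in full gives $\pi\psi + \frac{\pi\hbar^2}{c^2}\psi'' = C\psi$, equivalently
\begin{equation}
\psi'' = \frac{c^2(C-\pi)}{\pi\hbar^2}\,\psi,
\end{equation}
a constant-coefficient linear ODE whose character depends entirely on the sign of $C-\pi$. Every claim of the lemma will follow from reading off this sign.

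First I would dispatch the structural claims, which are immediate from the form \eqref{eq:C_operator}. The coefficient of $d^2/dv^2$ is the positive constant $\pi\hbar^2/c^2$, bounded away from zero, so $\hat{C}$ is uniformly elliptic with $C^\infty$ (indeed constant) coefficients. Symmetry on $D(\hat{C})$ follows from integrating by parts twice in $\langle\hat{C}\psi,\phi\rangle$: the Dirichlet conditions $\psi(\pm v_c)=\phi(\pm v_c)=0$ annihilate both boundary terms, yielding $\langle\hat{C}\psi,\phi\rangle=\langle\psi,\hat{C}\phi\rangle$. Reality of the spectrum is then the standard consequence of self-adjointness (established in \cite{alexa2025-operator}): if $\hat{C}\psi=C\psi$ with $\psi\neq 0$, then $C\|\psi\|^2=\langle\hat{C}\psi,\psi\rangle$ is real.

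The heart of the proof is the case analysis on $C-\pi$, which simultaneously yields the strict upper bound, simplicity, and unboundedness below. When $C\geq\pi$ the right-hand side is nonnegative, so solutions are linear (if $C=\pi$) or hyperbolic combinations of $\cosh$ and $\sinh$ (if $C>\pi$); in either case a nonzero solution cannot vanish at the two distinct endpoints $\pm v_c$, so no eigenvalue satisfies $C\geq\pi$, giving the strict bound $C_n<\pi$. When $C<\pi$, setting $k^2=c^2(\pi-C)/(\pi\hbar^2)>0$ reduces the problem to $\psi''=-k^2\psi$ with Dirichlet data, the textbook regular Sturm--Liouville problem. Its eigenfunctions are $\psi_n(v)=\sin\!\big(n\pi(v+v_c)/(2v_c)\big)$ with $k_n=n\pi/(2v_c)$, $n\geq 1$, and each eigenspace is one-dimensional because the initial-value solution with $\psi(-v_c)=0$ is fixed up to scale; hence the spectrum is simple. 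Inverting the relation between $k_n$ and $C_n$ gives the closed form
\begin{equation}
C_n=\pi\left(1-\frac{\pi^2\hbar^2 n^2}{4c^2 v_c^2}\right),
\end{equation}
which is real, strictly below $\pi$, and tends to $-\infty$ as $n\to\infty$, establishing unboundedness below.

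I expect the only genuine obstacle to be the exclusion of the regime $C\geq\pi$: one must verify rigorously that neither a nonconstant linear function nor a nontrivial $\cosh/\sinh$ combination can meet homogeneous Dirichlet conditions at two distinct points. This is an elementary Wronskian or direct-substitution argument, but it is the step that pins down the \emph{strictness} of $C_n<\pi$ and the orientation (below rather than above) of the unbounded spectrum — both of which distinguish $\hat{C}$ from the standard Dirichlet Laplacian and constitute the structurally essential content of the lemma.
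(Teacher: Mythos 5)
Your proof is correct, but it takes a genuinely more self-contained route than the paper's. The paper argues by citation: it invokes classical Sturm--Liouville theory \cite[Thm 4.3.1]{zettl2005sturm} for reality, discreteness, and simplicity, cites \cite{alexa2025-operator} for essential self-adjointness, and obtains the strict bound \( C_n < \pi \) by substituting the Dirichlet quantization \( k_n = (n+1)\pi/(2v_c) \) --- a formula that is only derived in the \emph{following} section of the paper --- into \( C_n = \pi(1 - \hbar^2 k_n^2/c^2) \) and observing that equality would force \( k_n = 0 \). You instead solve the constant-coefficient eigenvalue ODE directly and run a trichotomy on the sign of \( C - \pi \): for \( C \geq \pi \) the solutions are linear or hyperbolic combinations that cannot vanish at two distinct endpoints unless trivial, so no eigenvalue reaches or exceeds \( \pi \); for \( C < \pi \) the problem reduces to the textbook Dirichlet problem, yielding the explicit spectrum, simplicity (via one-dimensionality of the solution space pinned by \( \psi(-v_c) = 0 \), where the paper's corollary instead cites the oscillation theorem), and unboundedness below from the closed-form eigenvalues. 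Your handling of strictness is arguably the stronger one: the paper's argument presupposes the quantization formula and is thus a forward reference within its own logical order, whereas your hyperbolic/linear exclusion establishes \emph{a priori} that the regime \( C \geq \pi \) is empty, independently of the explicit spectrum. The only cosmetic discrepancy is indexing: your \( k_n = n\pi/(2v_c) \), \( n \geq 1 \), enumerates exactly the same set as the paper's \( k_n = (n+1)\pi/(2v_c) \), \( n \geq 0 \), so nothing of substance changes.
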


\begin{proof}
Follows from the structure of the operator \( \hat{C} \), which is a second-order uniformly elliptic operator with constant coefficients and regular Dirichlet boundary conditions. On such domains, the spectral theorem guarantees a real, discrete, simple spectrum; see \cite[Thm 4.3.1]{zettl2005sturm}. 

Moreover, for each eigenvalue we have
\begin{equation}
C_n = \pi\left(1 - \frac{\hbar^2}{c^2} k_n^2 \right)\label{eq:Cn-kappa_v},
\end{equation}
with \( k_n > 0 \) due to the Dirichlet quantization \( k_n = \frac{(n+1)\pi}{2v_c} \). Hence \( C_n < \pi \) for all \( n \). Equality \( C_n = \pi \) would require \( k_n = 0 \), which contradicts both the boundary conditions and nontriviality of eigenfunctions. Thus the upper bound is strict.

The essential self-adjointness of \( \hat{C} \) was established via deficiency-index computation in~\cite{alexa2025-operator}, so the classical Sturm--Liouville theory applies.
\end{proof}

\begin{corollary}[Spectral Properties]
All eigenvalues \( C_n \) of \( \hat{C} \) are real, simple, and can be arranged in decreasing order:
\[ C_0 > C_1 > C_2 > \dots, \quad \lim_{n \to \infty} C_n = -\infty. \]
The corresponding eigenfunctions \( \psi_n \) form an orthonormal basis in \( L^2([-v_c, v_c]) \). Simplicity of eigenvalues follows from the classical oscillation theorem: each eigenfunction \( \psi_n \) has exactly \( n \) interior zeros, ensuring non-degeneracy.
\end{corollary}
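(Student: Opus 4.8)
The plan is to derive every assertion from the closed-form spectral data already furnished by the preceding Lemma, where the eigenvalues were shown to take the form $C_n = \pi\bigl(1 - \frac{\hbar^2}{c^2}k_n^2\bigr)$ with the Dirichlet quantization $k_n = \frac{(n+1)\pi}{2v_c}$ for $n \in \{0,1,2,\dots\}$. Solving $\hat{C}\psi = C\psi$ reduces to $\psi'' = -k^2\psi$ with $k^2 = \frac{c^2(\pi - C)}{\pi\hbar^2}$, whose Dirichlet-admissible solutions on the symmetric interval are the trigonometric modes $\cos(k_n v)$ for even $n$ and $\sin(k_n v)$ for odd $n$; this explicit description will be used to corroborate both the zero-count and the completeness. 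The three distinct claims --- the strictly decreasing ordering together with divergence to $-\infty$, the orthonormal basis property, and simplicity --- are then handled in turn, the first being algebraic and the remaining two resting on classical Sturm--Liouville machinery.

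First I would settle the ordering and the limit. Since $n \mapsto k_n$ is strictly increasing and strictly positive, the map $k \mapsto \pi\bigl(1 - \frac{\hbar^2}{c^2}k^2\bigr)$ is strictly decreasing on $(0,\infty)$, so composing yields $C_0 > C_1 > C_2 > \cdots$ at once. Moreover $k_n \to \infty$ forces $k_n^2 \to \infty$, whence $C_n \to -\infty$, consistent with the upper bound $C_n < \pi$ from the Lemma.

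The analytically substantive step is the orthonormal basis property. Here I would show that $\hat{C}$ has compact resolvent: on the bounded interval the embedding $H^1_0([-v_c,v_c]) \hookrightarrow L^2([-v_c,v_c])$ is compact by Rellich--Kondrachov, so for $\lambda$ in the resolvent set the operator $(\hat{C} - \lambda)^{-1}$ is compact, and it is self-adjoint because $\hat{C}$ is. The spectral theorem for compact self-adjoint operators then produces an orthonormal basis of $L^2([-v_c,v_c])$ of resolvent eigenfunctions, which coincide with the $\psi_n$ up to normalization. The explicit trigonometric form gives an independent check, since $\{\cos(k_n v), \sin(k_m v)\}$ is precisely the classical complete orthogonal system on $[-v_c,v_c]$ under Dirichlet conditions.

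Finally, simplicity follows from the Sturm oscillation theorem: with the eigenvalues in decreasing order, $\psi_n$ has exactly $n$ interior zeros in $(-v_c,v_c)$ --- visible directly from the trigonometric modes --- so distinct indices give distinct zero counts and hence distinct one-dimensional eigenspaces. A one-line alternative is that under separated boundary conditions the solution of the second-order equation vanishing at $-v_c$ is unique up to scale. I expect no serious obstacle here, since the Lemma already supplies self-adjointness, discreteness, and the explicit eigenvalues; the only point demanding genuine care is completeness, where compactness of the resolvent --- rather than mere discreteness of the spectrum --- is what guarantees that the $\psi_n$ actually span $L^2([-v_c,v_c])$.
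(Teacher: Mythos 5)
Your proposal is correct, and all three conclusions match what the paper asserts, but your treatment of completeness takes a genuinely different route from the paper's. The paper proves nothing directly here: the corollary is presented as a consequence of the preceding lemma, whose proof simply cites the classical Sturm--Liouville expansion theorem (Zettl, Thm~4.3.1) for reality, simplicity, discreteness, and the orthonormal basis property, and invokes the oscillation theorem for simplicity exactly as you do. You instead derive completeness from first principles: compactness of the embedding \( H^1_0([-v_c,v_c]) \hookrightarrow L^2([-v_c,v_c]) \) (Rellich--Kondrachov) gives a compact resolvent, and the spectral theorem for compact self-adjoint operators yields the orthonormal eigenbasis. This buys a self-contained functional-analytic argument where the paper delegates to a black-box citation; the paper's route is shorter but proves nothing the reader can check. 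Two small points of care in your version: the resolvent \( (\hat{C}-\lambda)^{-1} \) is self-adjoint only for \emph{real} \( \lambda \) in the resolvent set, so you should note that such points exist --- e.g.\ any real \( \lambda > \pi \), since the spectrum satisfies \( C_n < \pi \) --- and your identification of the resolvent eigenfunctions with the \( \psi_n \) implicitly uses that the quantization lemma exhausted all Dirichlet eigenfunctions, which your explicit ODE reduction does supply. Incidentally, your parity decomposition (\( \cos(k_n v) \) for even \( n \), \( \sin(k_n v) \) for odd \( n \)) is equivalent to the paper's shifted sine \( \sin\bigl(k_n(v+v_c)\bigr) \) and is in fact a cleaner derivation than the paper's claim that \( \psi_n(-v_c)=0 \) forces \( B_n=0 \), which is not literally true as stated there; your zero count of exactly \( n \) interior zeros also checks out directly against these modes, confirming the oscillation-theorem argument both you and the paper use for simplicity.
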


\begin{remark}
All eigenvalues satisfy \( C_n < \pi \), with the spectrum unbounded below. This uniform upper bound follows directly from the explicit formula for \( C_n \); see equation~\eqref{eq:Cn-kappa_v}.
\end{remark}

Note that for small values of \( n \), the eigenvalues \( C_n \) may be positive, with the transition to negative values occurring at a critical index \( n_* \) where \( C_{n_*} = 0 \). Setting \( C_n = 0 \) in the eigenvalue formula yields the condition \( \frac{(n+1)\pi}{2v_c} = \frac{c}{\hbar} \), from which we deduce the critical index:
\begin{equation}
    n_* = \left\lfloor \frac{2v_c c}{\pi \hbar} \right\rfloor.
\end{equation}

Here \( \lfloor \cdot \rfloor \) denotes the floor function, so \( n_* \) is the largest index for which the eigenvalue \( C_n \) remains non-negative. For all \( n > n_* \), the spectrum enters the negative half-line.

\section{Explicit Spectrum and Eigenfunctions}

Having established the essential self-adjointness of the geometric deformation operator \( \hat{C} \), we now proceed to determine its spectral characteristics explicitly. The compactness of the interval \( [-v_c, v_c] \) and the Dirichlet boundary conditions ensure that the spectrum is purely discrete, and that the eigenfunctions form a complete orthonormal system in \( L^2([-v_c, v_c]) \). In this section, we solve the eigenvalue problem analytically, derive the corresponding eigenfunctions, and obtain an explicit expression for the eigenvalues \( \{C_n\} \). We further investigate their asymptotic behavior and confirm that the resulting family of deformation modes provides a spectral basis for geometric reconstruction.

\subsection{Exact Solution of the Eigenvalue Problem}

We solve the eigenvalue problem
\begin{equation} \label{eq:eigen_problem_v}
    \hat{C} \psi_n = C_n \psi_n, \quad \text{where } \hat{C} := \pi\left(1 + \frac{\hbar^2}{c^2} \frac{d^2}{dv^2} \right),
\end{equation}
with Dirichlet boundary conditions \( \psi_n(-v_c) = \psi_n(v_c) = 0 \). The domain is \( D(\hat{C}) = H^2([-v_c, v_c]) \cap H^1_0([-v_c, v_c]) \), and the interval is symmetric and compact.

Let us isolate the differential equation:
\begin{equation} \label{eq:reduced_v}
    \frac{d^2}{dv^2} \psi_n = -k_n^2 \psi_n,
\end{equation}
where the eigenvalue \( C_n \) is related to \( k_n \) by:
\begin{equation}
    C_n = \pi\left(1 - \frac{\hbar^2}{c^2} k_n^2 \right).
\end{equation}

\begin{definition}
The general solution to \eqref{eq:reduced_v} is a linear combination of sine and cosine:
\begin{equation}
    \psi_n(v) = A_n \sin(k_n v) + B_n \cos(k_n v).
\end{equation}
\end{definition}

\begin{lemma}[Dirichlet Spectrum Quantization]
Imposing boundary conditions \( \psi_n(\pm v_c) = 0 \) yields:
\begin{equation}
    B_n = 0, \quad k_n = \frac{(n+1)\pi}{2v_c},
\end{equation}
where \( n = 0, 1, 2, \dots \).

Hence, the normalized eigenfunctions are:
\begin{equation}
    \psi_n(v) = \sqrt{\frac{1}{v_c}} \sin\left( \frac{(n+1)\pi}{2v_c}(v + v_c) \right),
\end{equation}
with corresponding eigenvalues:
\begin{equation}
    C_n = \pi\left(1 - \frac{\hbar^2 \pi^2}{4c^2 v_c^2} (n+1)^2 \right).
\end{equation}
\end{lemma}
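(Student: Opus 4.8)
The plan is to impose the two Dirichlet conditions on the general solution $\psi_n(v) = A_n \sin(k_n v) + B_n \cos(k_n v)$ and to extract the quantization condition from the requirement that the resulting homogeneous linear system admit a nontrivial solution. Evaluating at $v = \pm v_c$ and using the parity of sine and cosine gives the pair $-A_n \sin(k_n v_c) + B_n \cos(k_n v_c) = 0$ and $A_n \sin(k_n v_c) + B_n \cos(k_n v_c) = 0$. The coefficient matrix has determinant $-\sin(2 k_n v_c)$, so a nonzero $(A_n, B_n)$ exists precisely when $\sin(2 k_n v_c) = 0$, i.e. $2 k_n v_c = (n+1)\pi$ with $n = 0, 1, 2, \dots$; the root $k_n = 0$ is discarded since it forces $\psi_n \equiv 0$, consistent with the strict bound $C_n < \pi$ of Lemma~1. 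This yields $k_n = (n+1)\pi/(2 v_c)$.

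Equivalently, and more transparently, I would first translate to $u = v + v_c \in [0, 2v_c]$, reducing the problem to $\psi'' = -k_n^2 \psi$ with $\psi(0) = \psi(2 v_c) = 0$. The condition $\psi(0) = 0$ kills the cosine term in the shifted coordinate — this is the precise sense in which the coefficient $B_n$ vanishes — leaving $\psi_n \propto \sin(k_n u)$, while $\psi(2 v_c) = 0$ reproduces $\sin(2 k_n v_c) = 0$. Undoing the shift gives the uniform closed form $\psi_n(v) \propto \sin\!\big(\tfrac{(n+1)\pi}{2 v_c}(v + v_c)\big)$. Note that in the original variable these eigenfunctions alternate in parity (even for odd $n+1$, odd for even $n+1$), so the statement $B_n = 0$ should be read in the shifted frame rather than literally in $v$.

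For normalization I would compute $\int_{-v_c}^{v_c} \sin^2\!\big(\tfrac{(n+1)\pi}{2 v_c}(v + v_c)\big)\,dv = \int_0^{2 v_c} \sin^2(k_n u)\,du = v_c$, so the normalizing constant is $\sqrt{1/v_c}$, producing the stated $\psi_n$. Finally, substituting $k_n = (n+1)\pi/(2 v_c)$ into the relation $C_n = \pi\big(1 - \tfrac{\hbar^2}{c^2} k_n^2\big)$ fixed by the preceding reduction gives $C_n = \pi\big(1 - \tfrac{\hbar^2 \pi^2}{4 c^2 v_c^2}(n+1)^2\big)$.

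The computations are entirely routine; the only point demanding care is the interpretation of the coefficient $B_n$. The main obstacle is conceptual rather than technical: the blanket assertion $B_n = 0$ is accurate only after the translation to $u = v + v_c$, whereas in the symmetric variable $v$ the eigenfunctions split into distinct even and odd families. I would therefore conduct the argument in the shifted coordinate to make the single formula $\psi_n(v) \propto \sin\!\big(\tfrac{(n+1)\pi}{2 v_c}(v + v_c)\big)$ unambiguous, and then separately verify that it satisfies both boundary conditions and $\int_{-v_c}^{v_c} |\psi_n|^2\,dv = 1$, so that the orthonormality underlying the spectral-basis claim of the Corollary is confirmed directly.
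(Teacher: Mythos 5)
Your proof is correct, and it is in fact more careful than the paper's own argument. The paper proves the lemma with the same general solution but written in the symmetric variable $v$: it asserts that $\psi_n(-v_c) = 0$ ``requires $B_n = 0$'' and that $\psi_n(v_c) = 0$ then gives $\sin(k_n v_c) = 0$, hence $k_n = (n+1)\pi/(2v_c)$. Taken literally in $v$, both steps are problematic --- a single boundary equation in two unknowns cannot force $B_n = 0$, and $\sin(k_n v_c) = 0$ would yield only $k_n = m\pi/v_c$, i.e.\ half the spectrum --- so the paper's proof is coherent only if one silently reads the solution in the shifted coordinate $u = v + v_c$, as the paper's subsequent remark $\psi_n(v_c) = \sin\left((n+1)\pi\right) = 0$ confirms. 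Your two-pronged treatment identifies and repairs exactly this ambiguity: the $2\times 2$ homogeneous system with determinant $-\sin(2k_n v_c)$ delivers the full quantization condition $2k_n v_c = (n+1)\pi$ rigorously in the symmetric frame, while the translation to $[0, 2v_c]$ is the frame in which $B_n = 0$ is literally true, and your observation that the eigenfunctions alternate between even and odd parity in $v$ pinpoints why the blanket claim $B_n = 0$ fails there. The remaining steps --- the normalization integral $\int_0^{2v_c} \sin^2(k_n u)\, du = v_c$ and substitution into $C_n = \pi\left(1 - \tfrac{\hbar^2}{c^2} k_n^2\right)$ --- coincide with the paper's computation. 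What your route buys is an airtight derivation of the complete spectrum with the parity structure made explicit; what the paper's implicit shifted-frame route buys is brevity at the cost of a literally false intermediate assertion.
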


\begin{proof}
To satisfy \( \psi_n(-v_c) = 0 \), the general solution
\( \psi_n(v) = A_n \sin(k_n v) + B_n \cos(k_n v) \)
requires \( B_n = 0 \). Applying \( \psi_n(v_c) = 0 \) gives
\( \sin(k_n v_c) = 0 \), so
\( k_n = (n+1)\pi/(2v_c) \). Substituting into the definition
\( C_n = \pi \left(1 - \frac{\hbar^2}{c^2} k_n^2 \right) \)
yields the result. Normalization follows by computing the \( L^2 \)-norm:
\begin{equation}
\int_{-v_c}^{v_c} \left[\sqrt{\frac{1}{v_c}} \sin\left( \frac{(n+1)\pi}{2v_c}(v + v_c) \right) \right]^2 dv = 1.
\end{equation}
\end{proof}

\begin{remark}
The sine function is shifted and scaled to satisfy:
\begin{equation}
\psi_n(-v_c) = 0, \quad \psi_n(v_c) = \sin\left((n+1)\pi \right) = 0.
\end{equation}
This ensures orthogonality and vanishing at the boundaries.
\end{remark}

\begin{corollary}[Orthonormality]
The eigenfunctions \( \{ \psi_n \} \) form an orthonormal basis in \( L^2([-v_c, v_c]) \), satisfying:
\begin{equation}
\int_{-v_c}^{v_c} \psi_n(v)\psi_m(v)\, dv = \delta_{nm}.
\end{equation}
\end{corollary}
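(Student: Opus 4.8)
The plan is to split the claim into two logically independent parts: the pointwise relation $\int_{-v_c}^{v_c}\psi_n\psi_m\,dv=\delta_{nm}$, which is an orthonormality statement, and the stronger assertion that $\{\psi_n\}$ is a \emph{basis}, which requires completeness in $L^2([-v_c,v_c])$. The normalization $\|\psi_n\|_{L^2}=1$ was already verified in the proof of the Dirichlet quantization lemma, so only the off-diagonal orthogonality and the completeness remain to be addressed.

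For orthogonality I would record two parallel arguments. The direct route changes variables $u=v+v_c$, mapping the symmetric interval onto $[0,2v_c]$ and turning each eigenfunction into the standard sine $\sqrt{1/v_c}\,\sin(k_n u)$ with $k_n=(n+1)\pi/(2v_c)$; the integral $\int_0^{2v_c}\sin(k_n u)\sin(k_m u)\,du$ is then evaluated by the product-to-sum identity, producing terms proportional to $\sin\big((k_n\pm k_m)\,2v_c\big)$, which vanish for $n\neq m$ because $(k_n\pm k_m)\,2v_c\in\pi\mathbb{Z}$. The cleaner abstract route, which I would present as primary, uses self-adjointness: since $\hat C$ is self-adjoint (from the cited deficiency-index computation) and its eigenvalues $C_n$ are simple and hence pairwise distinct, the identity $(C_n-C_m)\langle\psi_n,\psi_m\rangle=\langle\hat C\psi_n,\psi_m\rangle-\langle\psi_n,\hat C\psi_m\rangle=0$ forces $\langle\psi_n,\psi_m\rangle=0$ whenever $n\neq m$, with no integral computation required.

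The genuinely substantive step, and the one I expect to be the main obstacle, is completeness: orthonormality alone does not make $\{\psi_n\}$ a basis. I would establish it in one of two equivalent ways. The direct route observes that, after the shift $u=v+v_c$, the family $\{\sin((n+1)\pi u/(2v_c))\}_{n\geq0}$ is precisely the half-range Fourier sine system on $[0,2v_c]$, whose completeness in $L^2([0,2v_c])$ is classical; since $f(v)\mapsto f(u-v_c)$ is a unitary isomorphism $L^2([-v_c,v_c])\to L^2([0,2v_c])$, completeness transfers back verbatim. The abstract route instead invokes the spectral theorem for a regular Sturm--Liouville operator on a compact interval: such an operator has compact resolvent, so its normalized eigenfunctions form a complete orthonormal system, which is exactly the statement of the reference already used for the discreteness of the spectrum, namely \cite{zettl2005sturm}. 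I would present the Fourier route as the self-contained argument and cite the Sturm--Liouville theorem as confirmation, emphasizing that it is completeness, not orthonormality, where the real content of the corollary lies.
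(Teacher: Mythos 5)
Your proposal is mathematically correct, and it is strictly more self-contained than what the paper does: the paper attaches no proof to this corollary at all, treating it as an immediate consequence of the preceding Dirichlet quantization lemma together with the blanket appeal to classical Sturm--Liouville theory (the citation of Zettl, Thm 4.3.1, made in the Symmetry and Ellipticity lemma and repeated in the later Completeness of Eigenfunctions theorem, whose proof adds only the remark that \( \hat{C} \) ``reduces to a shifted Laplacian''). Your abstract track therefore coincides with the paper's route, but your primary arguments go further in a useful way: the explicit orthogonality computation (change of variables \( u = v + v_c \), product-to-sum identity, vanishing of \( \sin\bigl((k_n \pm k_m)\,2v_c\bigr) \) since \( (k_n \pm k_m)\,2v_c \in \pi\mathbb{Z} \)), the self-adjointness identity \( (C_n - C_m)\langle \psi_n, \psi_m\rangle = 0 \) combined with pairwise distinct eigenvalues, and above all the identification of the shifted system with the half-range Fourier sine basis on \( [0, 2v_c] \) under the unitary translation map. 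That last step is the real content of the word ``basis,'' and you are right to flag it: orthonormality alone does not give completeness, and the paper itself defers completeness to a later theorem that again rests entirely on the external citation. What your route buys is independence from that reference, exploiting the constant-coefficient structure to reduce everything to the classical Fourier sine system; what the paper's route buys is brevity and applicability to general regular Sturm--Liouville problems where no closed-form eigenfunctions exist. One minor point of care: in the abstract orthogonality argument, pairwise distinctness of the \( C_n \) should be read off from the explicit formula \( C_n = \pi\bigl(1 - \tfrac{\hbar^2}{c^2}k_n^2\bigr) \) with strictly increasing \( k_n \), rather than deduced from simplicity alone, though both observations are available here.
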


\subsection{Spectral Asymptotics}

\begin{lemma}[Asymptotic Behavior]
As \( n \to \infty \), the eigenvalues satisfy the quadratic asymptotic:
\begin{equation}
    C_n \sim - \frac{\pi^3 \hbar^2}{4 c^2 v_c^2} n^2.
\end{equation}
This holds in the limit \( n \to \infty \), with fixed parameters \( \hbar \), \( c \), and \( v_c \).
\end{lemma}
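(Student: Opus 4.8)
The plan is to read the asymptotic directly off the closed-form eigenvalue expression established in the Dirichlet Spectrum Quantization lemma, since that formula is exact for every $n$ and no additional spectral analysis is required. Writing $\alpha := \pi^3 \hbar^2 / (4 c^2 v_c^2)$ for the fixed positive constant, the eigenvalues are $C_n = \pi - \alpha (n+1)^2$, so the whole task reduces to identifying the dominant term of this explicit sequence as $n \to \infty$.

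First I would expand the square, $(n+1)^2 = n^2 + 2n + 1$, giving $C_n = -\alpha n^2 - 2\alpha n + (\pi - \alpha)$. The leading behavior is governed by the $-\alpha n^2$ term, while the linear term $-2\alpha n$ and the constant $\pi - \alpha$ are of strictly lower order. To make the equivalence precise in the sense of the $\sim$ relation, I would form the ratio $C_n / (-\alpha n^2)$ and show it converges to $1$. Explicitly,
\[
\frac{C_n}{-\alpha n^2} = \frac{(n+1)^2}{n^2} - \frac{\pi}{\alpha n^2} = \left(1 + \frac{1}{n}\right)^2 - \frac{\pi}{\alpha n^2},
\]
and each correction vanishes as $n \to \infty$ with $\hbar$, $c$, $v_c$ (hence $\alpha$) held fixed, so the ratio tends to $1$. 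This establishes $C_n \sim -\alpha n^2 = -\pi^3 \hbar^2 n^2 / (4 c^2 v_c^2)$, as claimed.

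There is no genuine obstacle here: the result is an immediate consequence of the exact quantization formula, and the only point requiring care is interpreting $\sim$ as the limit of a ratio rather than of a difference, so that the additive reference constant $\pi$ and the subleading linear term are correctly discarded. If a sharper statement were desired, the same expansion already furnishes the complete expression $C_n = -\alpha n^2 - 2\alpha n + (\pi - \alpha)$, from which one could extract the full large-$n$ correction series; but for the stated leading-order asymptotic the single ratio computation above suffices.
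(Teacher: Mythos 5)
Your proposal is correct and follows essentially the same route as the paper: both read the asymptotic directly off the exact quantization formula $C_n = \pi - \alpha(n+1)^2$ and expand $(n+1)^2 = n^2 + 2n + 1$ to isolate the dominant quadratic term. If anything, your ratio argument $C_n/(-\alpha n^2) \to 1$ is slightly more careful than the paper's proof, which merely writes $C_n \simeq \pi\left(1 - \frac{\hbar^2\pi^2}{4c^2v_c^2}n^2\right)$ and appeals to a uniform $O(n)$ remainder without explicitly justifying the discarding of the additive constant $\pi$ in the sense of the $\sim$ relation.
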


\begin{proof}
From the exact expression:
\begin{equation}
C_n = \pi \left(1 - \frac{\hbar^2 \pi^2}{4c^2 v_c^2} (n+1)^2 \right),
\end{equation}
we expand in the large-\(n\) limit using \( (n+1)^2 = n^2 + 2n + 1 \):
\begin{equation}
C_n \simeq \pi \left(1 - \frac{\hbar^2 \pi^2}{4c^2 v_c^2} n^2 \right).
\end{equation}
The \( O(n) \) remainder is uniform because \( (n+1)^2 = n^2 + 2n + 1 \), so the deviation from the asymptotic quadratic behavior is \( O(n) \) uniformly.
\end{proof}

\begin{remark}
The eigenvalues \( C_n \) satisfy \( C_n < \pi \) for all \( n \), and decay quadratically to \( -\infty \) as \( n \to \infty \). This confirms that the spectrum is unbounded below with a strict uniform upper bound. These properties, along with the completeness and orthonormality of the eigenfunctions \( \{\psi_n\} \), follow from the classical Sturm--Liouville theory on compact intervals with smooth coefficients and Dirichlet boundary conditions; see~\cite{zettl2005sturm, teschl2014ode}.
\end{remark}

\begin{proposition}[Refined Asymptotics]
The eigenvalues satisfy the expansion:
\begin{equation}
C_n = \pi - \frac{\hbar^2 \pi^3}{4 c^2 v_c^2} n^2 + O(n),
\end{equation}
where the remainder is uniform on intervals \( n \geq N \). The estimate follows from the Taylor expansion of the sine root condition \( \sin(k_n v_c) = 0 \) and the regular spacing of Dirichlet modes.
\end{proposition}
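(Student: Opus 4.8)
The plan is to derive the expansion directly from the exact eigenvalue formula already established in the Dirichlet Spectrum Quantization lemma, rather than from any genuine approximation of a transcendental condition. The key observation is that the root equation \( \sin(k_n v_c) = 0 \) admits the \emph{exact} solutions \( k_n = (n+1)\pi/(2v_c) \); hence no Taylor expansion is actually needed. The quantization is closed-form, so the claimed \( O(n) \) remainder is not an uncontrolled error term but an explicit lower-order polynomial, and the ``expansion'' terminates identically after the quadratic and linear pieces.

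First I would recall the exact expression
\[
C_n = \pi\left(1 - \frac{\hbar^2 \pi^2}{4 c^2 v_c^2}(n+1)^2\right),
\]
and expand the square via \( (n+1)^2 = n^2 + 2n + 1 \). Distributing the prefactor \( \pi \) and separating the leading quadratic term gives
\[
C_n = \pi - \frac{\hbar^2 \pi^3}{4 c^2 v_c^2}\, n^2 - \frac{\hbar^2 \pi^3}{4 c^2 v_c^2}(2n+1).
\]
The final group is precisely the remainder \( R_n := -\frac{\hbar^2 \pi^3}{4 c^2 v_c^2}(2n+1) \), which is manifestly \( O(n) \): indeed \( |R_n| \le \frac{3\hbar^2 \pi^3}{4 c^2 v_c^2}\, n \) for all \( n \ge 1 \), since \( 2n+1 \le 3n \) there. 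This establishes a uniform bound on every tail \( n \ge N \) with an explicit constant.

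I would then make the uniformity claim precise. Because \( R_n \) is an exact affine function of \( n \) with the parameters \( \hbar, c, v_c \) held fixed, the constant implicit in the \( O(n) \) symbol depends only on these fixed parameters and not on \( n \). Consequently the estimate holds uniformly for all \( n \ge N \) (in fact for all \( n \ge 0 \)), which is the entire content of the statement. No limiting argument or oscillation-theoretic input beyond the already-derived eigenvalue formula is required.

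The main difficulty here is expository rather than analytic: the stated justification invokes a ``Taylor expansion of the sine root condition,'' whereas the exactness of the Dirichlet quantization makes that expansion terminate exactly, leaving no transcendental correction. I would therefore frame the proof so as to emphasize that the \( O(n) \) term is explicit and exact, and flag that the only genuine (but trivial) verification is the bound \( 2n+1 = O(n) \). To the extent an obstacle exists, it is merely reconciling the asymptotic language of the statement with the fact that the identity is exact at every order.
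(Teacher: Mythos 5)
Your proposal is correct and follows essentially the same route as the paper: the paper's own argument (given in the proof of the preceding Asymptotic Behavior lemma, on which the proposition silently relies) likewise starts from the exact closed-form eigenvalue $C_n = \pi\bigl(1 - \tfrac{\hbar^2\pi^2}{4c^2v_c^2}(n+1)^2\bigr)$ and expands $(n+1)^2 = n^2 + 2n + 1$ to identify the $O(n)$ remainder. Your version is in fact tighter than the paper's, since you exhibit the remainder $-\tfrac{\hbar^2\pi^3}{4c^2v_c^2}(2n+1)$ explicitly with a concrete constant, and you correctly observe that the proposition's appeal to a ``Taylor expansion of the sine root condition'' is superfluous because the Dirichlet quantization is exact.
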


\section{Spectral Completeness and Rigidity}

The eigenfunctions \( \{\psi_n\} \) constructed in the previous section satisfy the Sturm--Liouville boundary value problem for the operator \( \hat{C} \) on the compact interval \( [-v_c, v_c] \), under Dirichlet conditions. The regularity and self-adjointness of the operator, together with the smoothness of its coefficients and compactness of the domain, imply that the eigenfunctions form a complete orthonormal basis in the Hilbert space \( L^2([-v_c, v_c]) \). In this section, we formalize this statement by proving spectral completeness and describing the spectral decomposition of arbitrary square-integrable functions in terms of the eigenbasis \( \{\psi_n\} \). We further establish a spectral rigidity result, showing that a uniform spectral profile corresponds to a constant function that lies outside the admissible domain. These results follow from the classical theory of compact self-adjoint operators and establish the functional analytic consistency of the spectral model.

\subsection{Hilbert Basis Property}

\begin{theorem}[Completeness of Eigenfunctions]
The sequence \( \{\psi_n\} \), defined by
\begin{equation}
\psi_n(v) := \sqrt{\frac{1}{v_c}} \sin\left( \frac{(n+1)\pi}{2v_c}(v + v_c) \right),
\end{equation}
forms a complete orthonormal basis in the Hilbert space \( L^2([-v_c, v_c]) \).
\end{theorem}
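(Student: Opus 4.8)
The plan is to establish completeness by reducing to the classical completeness of the Fourier sine system, treating orthonormality as already settled by the preceding corollary. The substantive content is the \emph{totality} of \( \{\psi_n\} \): I must show that the only \( f \in L^2([-v_c,v_c]) \) orthogonal to every \( \psi_n \) is \( f = 0 \), since together with orthonormality this is equivalent to the basis property.

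First I would perform the translation \( u = v + v_c \), which maps \( [-v_c, v_c] \) onto \( [0, 2v_c] \) and, being measure-preserving, induces a unitary isomorphism of the corresponding \( L^2 \) spaces; it carries \( \psi_n \) to \( \sqrt{1/v_c}\,\sin\!\big(\tfrac{(n+1)\pi}{2v_c}u\big) \). Writing \( L = 2v_c \) and \( m = n+1 \), the transported family is precisely \( \{\sin(m\pi u / L)\}_{m \ge 1} \), the standard Dirichlet sine system on \( [0,L] \). Completeness is invariant under this unitary identification, so it suffices to prove completeness of the sine system on \( [0,L] \).

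Second, I would invoke one of two classical routes. The abstract route observes that \( \hat{C} \) is a regular self-adjoint Sturm--Liouville operator on a compact interval, hence has compact resolvent; the Hilbert--Schmidt spectral theorem then forces its orthonormal eigensystem to be complete, see \cite[Thm 4.3.1]{zettl2005sturm}. The concrete route extends any candidate \( f \) oddly across the endpoints to a function on \( [-L, L] \), whose full Fourier series converges in \( L^2 \) by the density of trigonometric polynomials (Stone--Weierstrass); oddness kills the cosine coefficients, so vanishing sine coefficients force \( f = 0 \). I would present the abstract route as primary, since it aligns with the Sturm--Liouville framework already cited, and remark on the elementary Fourier route for self-containedness.

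The main obstacle is conceptual rather than computational: the entire weight rests on justifying totality, and the only genuine input is the density theorem underlying Fourier completeness, or equivalently the compactness of the resolvent. Once either is in hand, Parseval's identity upgrades the total orthonormal set to a basis and yields the \( L^2 \)-convergent expansion \( f = \sum_n \langle f, \psi_n\rangle\, \psi_n \) for every \( f \in L^2([-v_c,v_c]) \). I would close by noting explicitly that the eigenvalues \( C_n \) play no role in completeness—only the underlying sine eigenfunctions matter—so the bounded, \( \pi \)-capped spectrum established earlier does not obstruct the basis property.
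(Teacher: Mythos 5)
Your proposal is correct, and your primary route is the same as the paper's: the paper's proof consists essentially of the single observation that \( \hat{C} \) is a regular Sturm--Liouville operator with separated Dirichlet conditions on a compact interval, together with the citation to \cite[Thm 4.3.1]{zettl2005sturm} and the remark that \( \hat{C} \) is a shifted Laplacian. What you add beyond the paper is genuinely useful: (i) the explicit unitary translation \( u = v + v_c \) identifying \( \{\psi_n\} \) with the standard sine system \( \{\sin(m\pi u/L)\}_{m\ge 1} \) on \( [0, L] \), \( L = 2v_c \), which the paper only gestures at; (ii) the mechanism behind the citation (compact resolvent plus the Hilbert--Schmidt spectral theorem), which the paper leaves implicit; and (iii) a self-contained elementary alternative via odd extension and completeness of the trigonometric system, which bypasses Sturm--Liouville theory entirely. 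The paper's proof buys brevity at the cost of being purely a pointer to the literature; your version makes the argument verifiable without external references, and your closing observation that the eigenvalues \( C_n \) are irrelevant to completeness correctly isolates what the theorem does and does not depend on.
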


\begin{proof}
The operator \( \hat{C} \) is a regular second-order Sturm--Liouville operator with smooth coefficients and separated Dirichlet boundary conditions on a compact interval. According to the classical theory \cite[Thm 4.3.1]{zettl2005sturm}, such operators have a real, simple, discrete spectrum with corresponding eigenfunctions forming a complete orthonormal system in \( L^2 \). The specific form of \( \hat{C} \) preserves these properties because it reduces to a shifted Laplacian. Hence, \( \{\psi_n\} \) is complete in \( L^2([-v_c, v_c]) \).
\end{proof}

\begin{corollary}[Orthonormality]
For all \( n, m \in \{0,1,2,\dots\} \), the eigenfunctions satisfy:
\begin{equation}
\langle \psi_n, \psi_m \rangle := \int_{-v_c}^{v_c} \psi_n(v)\psi_m(v)\, dv = \delta_{nm}.
\end{equation}
\end{corollary}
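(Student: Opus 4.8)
The plan is to establish orthonormality by direct evaluation of the inner product, supplemented by the abstract observation that orthogonality for distinct indices is already forced by the self-adjointness of \( \hat{C} \). First I would apply the change of variables \( u = v + v_c \), which maps the symmetric interval \( [-v_c, v_c] \) onto \( [0, 2v_c] \) and turns each shifted sine into the standard Dirichlet mode \( \sin\!\big(\tfrac{(n+1)\pi}{2v_c}\,u\big) \). Writing \( L := 2v_c \), the inner product becomes \( \langle \psi_n, \psi_m \rangle = \tfrac{1}{v_c}\int_0^{L}\sin\!\big(\tfrac{(n+1)\pi u}{L}\big)\sin\!\big(\tfrac{(m+1)\pi u}{L}\big)\,du \), so the claim reduces to the classical orthogonality of the Fourier sine system on \( [0,L] \) with positive integer indices \( p = n+1 \) and \( q = m+1 \).

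Next I would use the product-to-sum identity \( \sin\alpha\sin\beta = \tfrac12\big[\cos(\alpha-\beta) - \cos(\alpha+\beta)\big] \) and integrate termwise, distinguishing two cases. For \( n \neq m \) the frequencies \( p-q \) and \( p+q \) are nonzero integers, so each cosine has antiderivative proportional to a sine vanishing at both endpoints \( u = 0 \) and \( u = L \); hence the integral is zero. For \( n = m \) the difference term degenerates to \( \cos 0 = 1 \), contributing \( \int_0^L 1\,du = L = 2v_c \), while the sum term \( \cos\!\big(2(n+1)\pi u/L\big) \) again integrates to zero; the prefactor \( \tfrac12\cdot\tfrac{1}{v_c} \) then yields exactly \( 1 \). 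Collecting both cases gives \( \langle \psi_n, \psi_m \rangle = \delta_{nm} \). As an independent structural check I would note that the off-diagonal vanishing also follows without computation from self-adjointness: since the eigenvalues are strictly ordered (Spectral Properties corollary), \( C_n \neq C_m \) for \( n \neq m \), and \( (C_n - C_m)\langle \psi_n, \psi_m \rangle = \langle \hat{C}\psi_n, \psi_m \rangle - \langle \psi_n, \hat{C}\psi_m \rangle = 0 \) forces orthogonality; the normalization \( \langle \psi_n, \psi_n \rangle = 1 \) is precisely the \( L^2 \)-norm computation already carried out in the proof of the Dirichlet Spectrum Quantization lemma and may simply be cited.

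I do not expect a genuine obstacle here: the statement is a standard orthogonality relation whose proof is routine once the boundary shift is removed. The only point requiring care is the diagonal case \( n = m \), which must be separated out because the difference-frequency cosine collapses to a constant; treating it uniformly with the off-diagonal case would spuriously suggest a vanishing integral. Applying the index identification \( \delta_{pq} = \delta_{n+1,\,m+1} = \delta_{nm} \) correctly then completes the argument.
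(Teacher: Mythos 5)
Your proof is correct, and it takes a genuinely more self-contained route than the paper does. The paper never evaluates these integrals: the corollary is stated without its own proof, resting on the Completeness Theorem immediately above it, whose justification is a citation to classical Sturm--Liouville theory (\cite[Thm 4.3.1]{zettl2005sturm} in the paper's bibliography) --- regular self-adjoint problems with separated Dirichlet conditions have eigenfunctions forming a complete orthonormal system --- followed by a remark asserting that the shifted sine system ``is known to be orthogonal'' and explicitly checking only the normalization \( \int_{-v_c}^{v_c} \psi_n^2\, dv = 1 \). You instead verify the full relation from scratch: the substitution \( u = v + v_c \) correctly reduces the problem to the Fourier sine system on \( [0, 2v_c] \), the product-to-sum identity with the case split at \( n = m \) is handled properly (including the arithmetic \( \tfrac{1}{v_c}\cdot\tfrac{1}{2}\cdot 2v_c = 1 \) and the index identification \( \delta_{pq} = \delta_{nm} \)), and the endpoint evaluations are right since \( p \pm q \) are integers. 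Your secondary argument --- symmetry of \( \hat{C} \) on its domain together with the strict ordering \( C_0 > C_1 > \cdots \) forces \( (C_n - C_m)\langle \psi_n, \psi_m\rangle = 0 \), hence off-diagonal orthogonality --- is precisely the abstract mechanism underlying the Sturm--Liouville result the paper cites, so in effect you supply the proof the paper outsources; note only that this step silently uses \( \psi_n, \psi_m \in D(\hat{C}) \) and the symmetry established in the prior work \cite{alexa2025-operator}, which is legitimate here. What the paper's route buys is brevity and the simultaneous delivery of completeness (which your computation does not and need not address for this statement); what your route buys is an elementary, citation-free verification, plus the economical observation that the diagonal case was already computed in the proof of the Dirichlet Spectrum Quantization lemma and can simply be referenced.
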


\begin{remark}
The sine system with argument scaled to match the boundary conditions is known to be orthogonal and complete on symmetric intervals. The normalization factor \( \sqrt{1/v_c} \) ensures unit \( L^2 \)-norm:
\begin{equation}
\int_{-v_c}^{v_c} \psi_n^2(v) \, dv = \frac{1}{v_c} \int_{-v_c}^{v_c} \sin^2\left( \frac{(n+1)\pi}{2v_c}(v + v_c) \right)\, dv = 1.
\end{equation}
\end{remark}

\subsection{Spectral Decomposition Theorem}

\begin{theorem}[Spectral Expansion]
Let \( f \in L^2([-v_c, v_c]) \). Then \( f \) admits a unique expansion:
\begin{equation}
f(v) = \sum_{n=0}^{\infty} \langle f, \psi_n \rangle \psi_n(v),
\end{equation}
where the series converges in the norm topology of \( L^2 \).
\end{theorem}

\begin{proof}
Follows from completeness of the orthonormal set \( \{\psi_n\} \) in the separable Hilbert space \( L^2([-v_c, v_c]) \). This is a standard result from Hilbert space theory; see, e.g., \cite[Ch. 3]{teschl2014ode}.
\end{proof}

\begin{definition}[Fourier Coefficients]
For each \( n \in \mathbb{N} \), define the projection of \( f \in L^2([-v_c, v_c]) \) onto the \( n \)-th eigenfunction as:
\begin{equation}
\hat{f}_n := \langle f, \psi_n \rangle = \int_{-v_c}^{v_c} f(v)\psi_n(v)\, dv.
\end{equation}
\end{definition}

\begin{remark}
The expansion
\begin{equation}
f(v) = \sum_{n=0}^{\infty} \hat{f}_n \psi_n(v)
\end{equation}
provides a spectral resolution of identity in terms of the eigenbasis of the operator \( \hat{C} \). Each coefficient \( \hat{f}_n \) captures the component of \( f \) aligned with the deformation mode \( \psi_n \).
\end{remark}

\begin{corollary}[Parseval Identity]
For all \( f \in L^2([-v_c, v_c]) \),
\begin{equation}
\|f\|_{L^2}^2 = \sum_{n=0}^{\infty} |\hat{f}_n|^2.
\end{equation}
\end{corollary}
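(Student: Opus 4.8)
The plan is to derive the identity directly from the $L^2$-norm convergence of the spectral expansion established in the Spectral Expansion theorem, combined with the orthonormality relations $\langle \psi_n, \psi_m \rangle = \delta_{nm}$ recorded in the preceding corollary. The guiding principle is that Parseval's identity is precisely the assertion that the orthonormal system $\{\psi_n\}$ captures the full $L^2$-energy of $f$, which is equivalent to its completeness --- a property already secured by the Completeness of Eigenfunctions theorem. No independent analytic machinery is needed; the result is a packaging of facts established earlier.

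First I would introduce the partial sums $S_N := \sum_{n=0}^{N} \hat{f}_n \psi_n$ and compute their squared norm. Expanding $\|S_N\|_{L^2}^2 = \langle S_N, S_N \rangle$ into a double sum and invoking orthonormality to annihilate the off-diagonal terms, one obtains the finite Pythagorean identity $\|S_N\|_{L^2}^2 = \sum_{n=0}^{N} |\hat{f}_n|^2$. This step uses only the bilinearity of the inner product together with $\langle \psi_n, \psi_m \rangle = \delta_{nm}$, and is entirely routine.

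Next I would pass to the limit $N \to \infty$. The Spectral Expansion theorem guarantees $S_N \to f$ in the norm topology of $L^2$; since the norm map is $1$-Lipschitz by the reverse triangle inequality $\big| \|S_N\|_{L^2} - \|f\|_{L^2} \big| \leq \|S_N - f\|_{L^2}$, it is continuous, so $\|S_N\|_{L^2} \to \|f\|_{L^2}$ and hence $\|S_N\|_{L^2}^2 \to \|f\|_{L^2}^2$. Combining with the Pythagorean identity yields $\|f\|_{L^2}^2 = \lim_{N\to\infty} \sum_{n=0}^{N} |\hat{f}_n|^2 = \sum_{n=0}^{\infty} |\hat{f}_n|^2$, which is the claim.

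There is no genuine obstacle here: every step is mechanical once completeness is in hand. The only point meriting emphasis --- and the conceptual crux --- is that one obtains equality rather than merely Bessel's inequality $\sum_{n} |\hat{f}_n|^2 \leq \|f\|_{L^2}^2$, which holds for \emph{any} orthonormal family. The upgrade to equality rests entirely on $\{\psi_n\}$ being a basis, not just an orthonormal set, and this completeness was supplied by the classical Sturm--Liouville theory invoked earlier. Accordingly, the present corollary is best stated as an immediate consequence of the Spectral Expansion theorem and the orthonormality corollary.
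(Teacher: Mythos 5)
Your proposal is correct and matches the paper's intent: the paper states the Parseval identity without proof, treating it as an immediate consequence of the Spectral Expansion theorem and orthonormality, which is exactly the route you take. Your partial-sum Pythagorean argument with norm-continuity in the limit simply spells out the standard details the paper leaves implicit, and your remark that completeness (not mere orthonormality) is what upgrades Bessel's inequality to equality is the right conceptual emphasis.
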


\begin{remark}
The Parseval identity confirms that the spectral decomposition conserves the norm and energy structure of the space, and justifies interpretation of the eigenfunctions \( \psi_n \) as fundamental deformation modes with orthogonal contributions.
\end{remark}

\subsection{Inverse Limit, Rigidity, and Spectral Uniqueness}

\medskip

We now strengthen this spectral picture by considering the behavior of spectral reconstructions in the limit \( \tau \to \infty \), where each coefficient \( C_n(\tau) \to \pi \). This allows us to derive uniform convergence of the full spectral sum and prove a rigidity theorem: the constant deformation \( C(v) = \pi \) is the unique configuration corresponding to a uniform spectrum.

\begin{lemma}[Inverse Limit Convergence]
\label{lem:inverse-limit}
Suppose that \( C_n(\tau) \to \pi \) exponentially as \( \tau \to \infty \). Then the spectral expansion
\begin{equation}
C(v, \tau) := \sum_{n=0}^\infty C_n(\tau)\psi_n(v)
\end{equation}
converges to the constant function \( \pi \) in the \( C^\infty \)-topology. That is,
\begin{equation}
C(\cdot, \tau) \to \pi \quad \text{in } C^\infty([-v_c, v_c]).
\end{equation}
\end{lemma}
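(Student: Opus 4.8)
The plan is to reduce convergence in the $C^\infty$-topology to simultaneous uniform convergence of every derivative of the partial sums. The topology is metrized by $d(f,g) = \sum_{j \geq 0} 2^{-j}\,\|\partial_v^j(f-g)\|_\infty / (1 + \|\partial_v^j(f-g)\|_\infty)$, so it suffices to control each sup-norm $\|\partial_v^j(C(\cdot,\tau) - \pi)\|_\infty$ as $\tau \to \infty$. Since the target $\pi$ is constant, this means showing $C(\cdot,\tau)\to\pi$ uniformly at order $j=0$ while $\partial_v^j C(\cdot,\tau)\to 0$ uniformly for every $j \geq 1$. The first step is therefore to set up the deviation $d_n(\tau) := C_n(\tau) - \pi$ and rewrite everything in terms of $\{d_n(\tau)\}$.

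Next I would differentiate the series term by term. Because $\psi_n(v) = v_c^{-1/2}\sin(k_n(v+v_c))$ with $k_n = (n+1)\pi/(2v_c)$, every derivative obeys the pointwise bound $\|\partial_v^j \psi_n\|_\infty \leq v_c^{-1/2} k_n^{\,j}$. Hence the formally differentiated series is dominated by the majorant $v_c^{-1/2}\sum_n |C_n(\tau)|\,k_n^{\,j}$, and both term-by-term differentiation and uniform convergence are legitimized by the Weierstrass $M$-test the moment this majorant is finite and decays as $\tau\to\infty$. Feeding in the exponential hypothesis, I would seek a joint estimate $|d_n(\tau)| \leq M\,e^{-\lambda\tau}\rho_n$ with $\{\rho_n\}$ rapidly decaying (faster than any polynomial), so that $\sum_n \rho_n k_n^{\,j} < \infty$ for all $j$; under such a bound the order-$j$ majorant is $O(e^{-\lambda\tau})$, yielding exponential decay of every sup-norm at once and therefore $C^\infty$-convergence.

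The main obstacle is reconciling the two limits latent in the statement: the hypothesis $C_n(\tau)\to\pi$ is a limit in $\tau$ at fixed $n$, whereas convergence of $\sum_n C_n(\tau)\psi_n(v)$ is a limit in $n$ at fixed $\tau$, and these do not commute. Pointwise-in-$n$ exponential convergence alone does not make the differentiated series summable, since the ``plateau'' on which $C_n(\tau)\approx\pi$ extends to ever larger $n$ as $\tau$ grows and $\sum_n \pi\,k_n^{\,j}$ diverges for every $j$; only the decay of $d_n(\tau)$ \emph{jointly} in $n$ and $\tau$ controls the tail. I therefore expect the crux of the proof to lie in isolating (or deriving from the underlying flow dynamics that generate $C_n(\tau)$) a uniform-in-$n$ rapid-decay estimate of the form above. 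Once that estimate is in hand, the zeroth-order term gives uniform convergence to $\pi$, each higher-order differentiated series is controlled by its $O(e^{-\lambda\tau})$ majorant, and the Weierstrass/Parseval machinery closes the argument routinely; the remaining steps are bookkeeping on the metric $d$.
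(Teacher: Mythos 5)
Your skeleton --- metrizing the $C^\infty$-topology, differentiating the series term by term, bounding $\|\partial_v^j \psi_n\|_\infty \le v_c^{-1/2} k_n^j$, and closing with the Weierstrass $M$-test --- is exactly the skeleton of the paper's proof, and the obstacle you flag is genuine: a bound $|C_n(\tau) - \pi| \le A e^{-\beta\tau}$ that is uniform in $n$ but does not decay in $n$ produces the order-$k$ majorant $A e^{-\beta\tau} \sum_n C_k n^{k+1}$, which diverges for every $k$ and every $\tau$. You should know that the paper does not resolve this: its proof writes down precisely this majorant (equation \eqref{eq:remainder-bound}) and asserts that it ``converges uniformly for all $k$,'' which is false as written. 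So your refusal to close the argument without a joint-in-$(n,\tau)$ decay estimate is not a shortcoming relative to the paper; it is a correct diagnosis of the gap in the paper's own argument.

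However, your proposed repair --- derive $|d_n(\tau)| \le M e^{-\lambda\tau}\rho_n$ with $\rho_n$ rapidly decaying and run the $M$-test on the deviation series --- cannot work either, because of a second flaw that you (and the paper) tacitly rely on: rewriting $C(\cdot,\tau) - \pi = \sum_n d_n(\tau)\psi_n$ presupposes the identity $\pi \equiv \sum_n \pi \psi_n(v)$, i.e.\ that the constant function has coefficients all equal to $\pi$ in this Dirichlet basis. It does not: a direct computation gives
\begin{equation}
\langle 1, \psi_n \rangle \;=\; \frac{1}{\sqrt{v_c}} \int_{-v_c}^{v_c} \sin\!\bigl(k_n(v+v_c)\bigr)\, dv \;=\; \frac{1-(-1)^{n+1}}{k_n \sqrt{v_c}},
\end{equation}
which is $O(1/n)$ and vanishes for odd $n$; in particular $\sum_n \pi\psi_n$ diverges, so the ``plateau'' term can never be absorbed. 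This dooms the lemma itself, not merely its proof. If each expansion $\sum_n C_n(\tau)\psi_n$ converges in $L^2$ (the minimum needed for the statement to parse), then $C_n(\tau) = \langle C(\cdot,\tau), \psi_n\rangle$, and convergence $C(\cdot,\tau) \to \pi$ in $C^\infty$ (hence in $L^2$) would force $C_n(\tau) \to \pi\langle 1,\psi_n\rangle \neq \pi$, contradicting the hypothesis. Concretely, $C_n(\tau) = \pi e^{-n e^{-\beta\tau}}$ satisfies pointwise exponential convergence to $\pi$ and defines a smooth $C(\cdot,\tau)$ for every $\tau$, yet $\|C(\cdot,\tau)\|_{L^2}^2 = \pi^2 \sum_n e^{-2n e^{-\beta\tau}} \to \infty$; under the paper's uniform reading of the hypothesis, the coefficients are bounded below by $\pi/2$ for large $\tau$ and the series is not even in $L^2$. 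The honest conclusion of your (correct) analysis is that no choice of $\rho_n$ reconciles the hypothesis with the conclusion: the statement as posed is inconsistent, and what would need to change is the lemma, not the estimate.
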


\begin{proof}
Since $\psi_n \in C^\infty([-v_c, v_c])$ and all derivatives $\psi_n^{(k)}$ satisfy
\begin{equation}
|\psi_n^{(k)}(v)| \le C_k n^{k+1}
\end{equation}
uniformly in $v$, and the coefficients satisfy
\begin{equation}
|C_n(\tau) - \pi| \le A e^{-\beta \tau}
\end{equation}
the remainder
\begin{align}
\left| \partial^k_v C(v,\tau) - \partial^k_v \pi \right| 
&\le \sum_{n=0}^\infty |C_n(\tau) - \pi| \cdot |\psi_n^{(k)}(v)| \notag \\
&\le A \sum_{n=0}^\infty e^{-\beta \tau} \cdot C_k n^{k+1}
\label{eq:remainder-bound}
\end{align}

converges uniformly for all $k$.

By Sobolev embedding and the convergence of weighted sums $\sum n^{k+1}e^{-\beta \tau}$, the convergence holds in all $C^k$, hence in $C^\infty$.
\end{proof}

\medskip

The next result establishes that the uniform spectrum \( C_n = \pi \) corresponds to a unique geometric configuration: the constant profile \( C(v) = \pi \).

\medskip

\begin{theorem}[Spectral Rigidity]
\label{thm:spectral-rigidity}
Let 
\begin{equation}
a_n := \langle C, \psi_n \rangle
\qquad(n = 0,1,2,\dots)
\end{equation}
be the Fourier coefficients of a smooth deformation \( C \in L^2([-v_c, v_c]) \). Suppose that \( a_n = \pi \) for all \( n \). Then no such function \( C(v) \) exists in the spectral domain \( H^2([-v_c, v_c]) \cap H^1_0([-v_c, v_c]) \).

In particular, the only smooth function with all coefficients equal to \( \pi \) is the constant profile \( C(v) \equiv \pi \), which violates the Dirichlet boundary conditions and hence does not lie in the domain of the operator \( \hat{C} \). Therefore, the deformation is spectrally rigid: the condition \( a_n = \pi \ \forall n \) cannot be realized by any admissible function.
\end{theorem}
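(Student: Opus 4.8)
The plan is to argue by contradiction using the Parseval identity established as a corollary to the Spectral Expansion theorem. The essential point is that the proposed coefficient sequence $a_n \equiv \pi$ fails the most elementary necessary condition for arising from an $L^2$ function: its terms do not tend to zero. Since the operator domain $H^2([-v_c,v_c]) \cap H^1_0([-v_c,v_c])$ embeds continuously into $L^2([-v_c,v_c])$, any admissible profile $C$ is in particular square-integrable, so Parseval gives $\|C\|_{L^2}^2 = \sum_{n=0}^\infty |a_n|^2$. Substituting $a_n = \pi$ produces $\sum_{n=0}^\infty \pi^2 = +\infty$, contradicting $C \in L^2$. Equivalently, Bessel's inequality forces $a_n \to 0$ for any $L^2$ function, and the nondecaying sequence $a_n \equiv \pi$ violates this. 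This already establishes that no function in the operator domain can realize the hypothesis.

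For the "in particular" clause I would treat the constant profile $C(v) \equiv \pi$ by a separate, geometric argument. This function is smooth and square-integrable on the compact interval, so it is excluded from the domain not by any summability failure but purely by the boundary conditions: $\pi \ne 0$ at $v = \pm v_c$, hence $\pi \notin H^1_0([-v_c,v_c])$ and therefore $\pi \notin D(\hat{C})$. This is the content that ties the rigidity statement to Lemma~\ref{lem:inverse-limit}, where the eigenvalues $C_n(\tau)$ converge to the reference value $\pi$ and the limiting constant configuration is precisely the object lying outside the Dirichlet domain.

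The step I expect to be the main obstacle is keeping these two exclusion mechanisms cleanly separated, because the Fourier coefficients of the constant function $\pi$ are not themselves equal to $\pi$. A direct evaluation of $\langle \pi, \psi_n\rangle$ against the shifted sine basis gives coefficients proportional to $1/(n+1)$ for even $n$ and vanishing for odd $n$, so the constant profile is the function whose coefficients decay like $1/n$, not the one whose coefficients are identically $\pi$. The careful resolution is therefore to distinguish the two regimes explicitly: the coefficient hypothesis $a_n \equiv \pi$ is ruled out by the $\ell^2$ obstruction via Parseval, while the constant profile $\pi$ is ruled out by the $H^1_0$ boundary obstruction. Making this distinction precise is what renders the combined rigidity statement rigorous.
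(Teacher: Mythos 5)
Your proof is correct, and it takes a genuinely different --- and in fact sounder --- route than the paper's. The paper argues by asserting that every eigenfunction is orthogonal to constants, \( \langle 1, \psi_n \rangle = 0 \) for all \( n \), concludes that the only candidate with \( a_n \equiv \pi \) is the constant \( \pi \), and then excludes that constant by the boundary conditions. That orthogonality claim is false, exactly as you observed: substituting \( u = v + v_c \) gives \( \int_{-v_c}^{v_c} \psi_n(v)\, dv = \sqrt{1/v_c}\,\frac{2v_c}{(n+1)\pi}\bigl(1 - (-1)^{n+1}\bigr) \), which is nonzero (of order \( 1/n \)) for every even \( n \); indeed, if the constant \( 1 \) were orthogonal to all \( \psi_n \), completeness would force \( 1 = 0 \) in \( L^2 \), contradicting the paper's own Completeness Theorem. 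Your Parseval/Bessel argument avoids this entirely: \( \sum_n |a_n|^2 = \infty \) for \( a_n \equiv \pi \) rules out every \( L^2 \) function --- a conclusion even stronger than the theorem needs, since \( H^2 \cap H^1_0 \subset L^2 \) --- and your separate \( H^1_0 \)-boundary argument correctly explains why the constant profile \( \pi \) is inadmissible, which is a logically distinct fact. Your closing observation, that the constant \( \pi \) has coefficients decaying like \( 1/(n+1) \) (nonzero only for even \( n \)) rather than identically \( \pi \), pinpoints precisely the conflation on which the paper's proof, and the theorem's ``in particular'' clause, rests. What your approach buys is rigor and a clean separation of the two obstructions (\( \ell^2 \) summability versus the Dirichlet boundary condition); what the paper's approach attempts --- identifying the function that realizes the uniform spectrum --- cannot actually be done, because no such function exists in \( L^2 \) at all.
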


\begin{proof}
Assume \( a_n = \pi \) for all \( n \). Then formally
\begin{equation}
C(v) = \sum_{n=0}^\infty a_n \psi_n(v) = \pi \sum_{n=0}^\infty \psi_n(v).
\end{equation}
But each eigenfunction \( \psi_n(v) \) satisfies the Dirichlet boundary conditions \( \psi_n(\pm v_c) = 0 \) and is orthogonal to constant functions. Explicitly, for all \( n \),
\begin{align}
\langle 1, \psi_n \rangle 
&= \int_{-v_c}^{v_c} \psi_n(v)\, dv \notag \\
&= \sqrt{\frac{1}{v_c}} \int_{-v_c}^{v_c} 
\sin\left( \frac{(n+1)\pi}{2v_c}(v + v_c) \right) dv \notag \\
&= 0.
\end{align}
since the integral of sine over a symmetric interval vanishes. Hence all Fourier coefficients of any constant function vanish in this basis.

Therefore, the only function with all \( a_n = \pi \) must be identically constant: \( C(v) = \pi \). However, this function does not satisfy the Dirichlet boundary conditions and thus lies outside the domain \( H^2 \cap H^1_0 \) of the operator \( \hat{C} \). Consequently, no admissible function can have such a uniform spectral profile.
\end{proof}

\begin{corollary}[Spectral Uniqueness of Geometry]
Let \( C_1, C_2 \in H^2([-v_c, v_c]) \cap H^1_0([-v_c, v_c]) \) be two smooth deformation profiles such that
\begin{equation}
\langle C_1, \psi_n \rangle = \langle C_2, \psi_n \rangle \quad \forall n.
\end{equation}
Then \( C_1(v) \equiv C_2(v) \).
\end{corollary}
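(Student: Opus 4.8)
The plan is to prove the corollary as an immediate consequence of the completeness and orthonormality of the eigenbasis $\{\psi_n\}$, established in the Completeness theorem above. The statement asserts spectral uniqueness: if two admissible profiles $C_1, C_2 \in H^2 \cap H^1_0$ share every Fourier coefficient $\langle C_1, \psi_n\rangle = \langle C_2, \psi_n\rangle$, then $C_1 \equiv C_2$. This is essentially the injectivity of the spectral transform $f \mapsto (\hat f_n)_{n\geq 0}$ on $L^2([-v_c, v_c])$.

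First I would set $g := C_1 - C_2$. Since $H^2 \cap H^1_0$ is a linear subspace of $L^2([-v_c, v_c])$, the difference $g$ lies in the same domain and in particular $g \in L^2([-v_c, v_c])$. By linearity of the inner product, the hypothesis gives $\langle g, \psi_n\rangle = \langle C_1, \psi_n\rangle - \langle C_2, \psi_n\rangle = 0$ for every $n$. Thus every Fourier coefficient of $g$ vanishes.

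Next I would invoke the Spectral Expansion theorem together with the Parseval identity. Applying the expansion to $g$ yields
\begin{equation}
g(v) = \sum_{n=0}^\infty \langle g, \psi_n\rangle\, \psi_n(v) = 0,
\end{equation}
where the series converges in $L^2$-norm; equivalently, by Parseval,
\begin{equation}
\|g\|_{L^2}^2 = \sum_{n=0}^\infty |\langle g,\psi_n\rangle|^2 = 0.
\end{equation}
Hence $g = 0$ as an element of $L^2$, i.e.\ $C_1 = C_2$ almost everywhere. To upgrade this to the pointwise identity $C_1(v) \equiv C_2(v)$ claimed in the statement, I would note that both profiles are assumed smooth (or at least continuous, being in $H^2$ on a one-dimensional compact interval, where Sobolev embedding $H^2 \hookrightarrow C^1$ holds), so equality almost everywhere forces equality everywhere by continuity.

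The proof presents no real obstacle: it is a direct application of completeness, which guarantees that the only $L^2$-function orthogonal to every basis element is the zero function. The only point requiring a word of care is the passage from equality in $L^2$ (almost everywhere) to the pointwise statement $C_1 \equiv C_2$, which is harmless here precisely because the admissible domain embeds into continuous functions on the compact interval. I would therefore present this step explicitly rather than leaving it implicit, since the corollary is phrased as a pointwise identity.
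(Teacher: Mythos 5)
Your proposal is correct and follows essentially the same route as the paper: subtract the two profiles, observe by linearity that all Fourier coefficients of the difference vanish, and conclude from completeness (equivalently Parseval) that the difference is zero in \( L^2 \). Your additional remark upgrading \( L^2 \)-equality to pointwise equality via the embedding \( H^2 \hookrightarrow C^1 \) on the compact interval is a small but worthwhile refinement that the paper leaves implicit.
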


\begin{proof}
Subtracting the two expansions shows that the difference \( C_1 - C_2 \) has zero projection on all basis functions \( \psi_n \), and hence vanishes identically in \( L^2 \) by completeness.
\end{proof}

\section{Spectral Interpretation of Geometry}

The spectral decomposition of the deformation operator \( \hat{C} \), established in the previous sections, admits a direct geometric interpretation. Each eigenfunction \( \psi_n(v) \) describes a fundamental deformation mode on the interval \( v \in [-v_c, v_c] \), and the corresponding eigenvalue \( C_n \) quantifies the amplitude of this mode. Taken together, the pair \( (\psi_n, C_n) \) constitutes a spectral building block of geometry. In this section, we formalize this viewpoint and show how the deformation function \( C(v) \) can be spectrally reconstructed from its eigenbasis. This yields a discretized, mode-wise characterization of geometric structure, analogous to the Fourier representation of signals or the expansion of metrics in terms of Laplacian eigenfunctions on Riemannian manifolds \cite{rosenberg1997laplacian, berline2003heat}.

\begin{remark}
As established in the previous section, this spectral representation is uniquely determined: the full sequence of coefficients \( \{ \langle C, \psi_n \rangle \} \) specifies the deformation \( C(v) \) without ambiguity. In particular, Theorem~\ref{thm:spectral-rigidity} and its corollary guarantee that no two distinct admissible functions can share the same spectral data. Therefore, the pair \( (\psi_n, C_n) \) encodes not just a mode of deformation, but an indivisible quantum of geometric information.
\end{remark}

\subsection{Spectral Reconstruction of the Deformation Function}

Let \( \{ \psi_n \} \) be the orthonormal eigenfunctions of \( \hat{C} \), and \( \{ C_n \} \) the corresponding eigenvalues. Since \( \{\psi_n\} \) form a complete orthonormal basis of \( L^2([-v_c, v_c]) \), any square-integrable function \( f \in L^2([-v_c, v_c]) \) admits a spectral expansion:
\begin{equation}
f(v) = \sum_{n=0}^\infty \langle f, \psi_n \rangle \psi_n(v).
\end{equation}

\begin{proposition}[Spectral Reconstruction]
\label{prop:spectral-recon}
The deformation function \( C(v) = \pi\left(1 - \frac{v^2}{c^2}\right) \) admits the expansion:
\begin{equation}
C(v) = \sum_{n=0}^\infty \langle C, \psi_n \rangle \psi_n(v),
\end{equation}
where
\begin{equation}
\langle C, \psi_n \rangle := \int_{-v_c}^{v_c} C(v) \psi_n(v)\, dv.
\end{equation}
\end{proposition}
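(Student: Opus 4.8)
The plan is to establish the expansion in Proposition~\ref{prop:spectral-recon} as a direct application of the Spectral Expansion theorem already proved above. First I would verify the hypothesis of that theorem: the deformation function \( C(v) = \pi(1 - v^2/c^2) \) is a polynomial, hence continuous and bounded on the compact interval \( [-v_c, v_c] \), so it is certainly square-integrable, \( C \in L^2([-v_c, v_c]) \). Since \( \{\psi_n\} \) forms a complete orthonormal basis of \( L^2([-v_c, v_c]) \) (by the Completeness theorem), the general spectral expansion applies verbatim with \( f = C \), yielding convergence of \( \sum_n \langle C, \psi_n\rangle \psi_n \) to \( C \) in the \( L^2 \)-norm. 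This is the core of the argument and requires essentially no new work beyond checking membership in \( L^2 \).

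Second, I would make the coefficients explicit by evaluating \( \langle C, \psi_n \rangle = \int_{-v_c}^{v_c} \pi(1 - v^2/c^2)\,\psi_n(v)\,dv \). The plan is to substitute the shifted-sine form \( \psi_n(v) = v_c^{-1/2}\sin\!\bigl(\tfrac{(n+1)\pi}{2v_c}(v+v_c)\bigr) \) and integrate a quadratic polynomial against a sine, which reduces to standard integrals computable by integration by parts. I would note that the even/odd structure matters here: writing \( u = v + v_c \) shifts the argument, and the resulting coefficients will generally be nonzero precisely because \( C(v) \) is not constant, in contrast to the vanishing coefficients of the constant profile computed in Theorem~\ref{thm:spectral-rigidity}. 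I would present the closed form for \( \langle C, \psi_n\rangle \) but treat the elementary trigonometric integration as routine.

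The main subtlety to address is the \emph{mode of convergence}, and here I would be careful not to overclaim. The completeness of \( \{\psi_n\} \) guarantees only \( L^2 \)-convergence of the series, and I would state the proposition at that level unless stronger regularity is invoked. One could upgrade to uniform or \( C^\infty \) convergence, but there is a genuine obstacle: \( C(v) \) does \emph{not} satisfy the Dirichlet boundary conditions \( C(\pm v_c) = 1 \neq 0 \), so \( C \notin H^1_0 \) and the sine series cannot converge uniformly up to the endpoints (the partial sums vanish at \( \pm v_c \) while \( C \) does not). This is the same boundary obstruction exploited in the rigidity theorem, and it caps the achievable convergence at the \( L^2 \) level on the closed interval (with locally uniform convergence on compact subsets of the open interval). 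I would therefore phrase the conclusion as \( L^2 \)-convergence, consistent with the Spectral Expansion theorem, and flag the boundary mismatch as the reason no stronger global statement holds.

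The hard part is conceptual rather than computational: recognizing that the proposition is a specialization of an already-established result and resisting the temptation to assert pointwise or uniform convergence that the boundary behavior forbids. Once \( C \in L^2 \) is confirmed, completeness closes the argument immediately; the explicit coefficient formula is a bonus obtained by elementary integration.
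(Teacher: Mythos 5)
Your proposal is correct and takes essentially the same route as the paper: both verify that \( C \in L^2([-v_c, v_c]) \) (the paper via smoothness and real-analyticity, you via polynomial boundedness on the compact interval) and then invoke completeness of the orthonormal system \( \{\psi_n\} \) to obtain the \( L^2 \)-convergent expansion. Your additional observations --- the plan for explicit coefficient computation and, more importantly, the correct remark that \( C(\pm v_c) = 1 \neq 0 \) blocks uniform convergence up to the endpoints so the statement must be read in the \( L^2 \) sense --- go beyond the paper's proof but are accurate and consistent with it.
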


\begin{proof}
Since \( C(v) \) is a smooth, real-analytic function on the compact interval \( [-v_c, v_c] \), it belongs to \( L^2([-v_c, v_c]) \). By completeness of the orthonormal system \( \{\psi_n\} \subset L^2([-v_c, v_c]) \), the function admits a convergent spectral expansion
\begin{equation}
C(v) = \sum_{n=0}^\infty \langle C, \psi_n \rangle \psi_n(v),
\end{equation}
with the inner products well defined.
\end{proof}

\begin{remark}
The spectral expansion reflects the intrinsic geometric content of the deformation function, with each mode \( \psi_n \) contributing a distinct component weighted by its projection coefficient.
\end{remark}

\subsection{Quantized Geometric Modes}

\begin{definition}[Geometric Mode]
Each pair \( (\psi_n, C_n) \) defines a \emph{geometric mode} of deformation. The function \( \psi_n(v) \) specifies the spatial profile, while the eigenvalue \( C_n \) quantifies the magnitude of deformation associated to this profile.
\end{definition}

\begin{theorem}[Discrete Geometry]
The space of admissible deformations on \( [-v_c, v_c] \) is spanned by the eigenmodes \( \{ \psi_n \} \), with each mode corresponding to a distinct quantized level of geometric variation determined by \( C_n \).
\end{theorem}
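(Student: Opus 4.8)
The plan is to recognize that this theorem is a geometric restatement of results already established, so the proof should consist of two moves: first, fixing a precise meaning for the informal phrases ``space of admissible deformations,'' ``spanned by,'' and ``distinct quantized level''; and second, assembling the completeness theorem and the simplicity/ordering corollary into the stated conclusion. I would begin by declaring that the \emph{space of admissible deformations} is the operator domain $D(\hat{C}) = H^2([-v_c,v_c]) \cap H^1_0([-v_c,v_c])$, regarded inside the ambient Hilbert space $L^2([-v_c,v_c])$, and that ``spanned by'' means the closed linear hull of $\{\psi_n\}$ exhausts the space in the appropriate norm.

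For the spanning claim I would invoke the Completeness of Eigenfunctions theorem directly: $\{\psi_n\}$ is a complete orthonormal basis of $L^2([-v_c,v_c])$, so every admissible $f$ satisfies $f = \sum_{n=0}^\infty \langle f, \psi_n\rangle \psi_n$ with convergence in $L^2$. To record that the span is faithful \emph{within} the domain and not merely in the ambient $L^2$ topology, I would use that $\hat{C}$ is self-adjoint with purely discrete spectrum and compact resolvent; the spectral theorem then upgrades the expansion of any $f \in D(\hat{C})$ to convergence in the graph norm, since $\sum_n |C_n|^2 |\langle f,\psi_n\rangle|^2 = \|\hat{C} f\|_{L^2}^2 < \infty$. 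This makes precise the assertion that the admissible deformations coincide with the closure of the eigenmode span.

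For the ``distinct quantized level'' clause I would appeal to the Corollary on Spectral Properties together with the explicit eigenvalue formula. Simplicity of the spectrum and the strict ordering $C_0 > C_1 > \cdots$ guarantee that the map $n \mapsto C_n$ is injective, so each eigenmode $\psi_n$ is attached to a single, distinct eigenvalue. The closed form $C_n = \pi\bigl(1 - \tfrac{\hbar^2 \pi^2}{4 c^2 v_c^2}(n+1)^2\bigr)$ exhibits these values as a discrete sequence indexed by $n \in \{0,1,2,\dots\}$, which is precisely the ``quantization'' of geometric variation; I would phrase the geometric reading by noting that $C_n$ measures the deformation amplitude carried by the profile $\psi_n$, in analogy with the Laplacian eigenvalue labeling cited in the surrounding discussion.

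Since every analytic ingredient is already in hand, I expect no genuine analytic obstacle. The only real work is conceptual: choosing the right domain and topology so that ``spanned'' is a true statement rather than a slogan. The subtle point to guard against is conflating $L^2$-completeness with a basis property \emph{inside} $H^2 \cap H^1_0$ --- the eigenfunctions lie in the domain and their span is dense there in the graph norm, but the domain is a proper dense subspace of $L^2$, so I would state the spanning carefully for each topology rather than asserting a single unqualified ``basis of the domain.''
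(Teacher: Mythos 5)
Your proposal is correct and follows essentially the same route as the paper: completeness of \( \{\psi_n\} \) yields the spanning claim, and discreteness/simplicity of the spectrum of \( \hat{C} \) yields the quantization. The paper's own proof is an informal two-sentence version of exactly this argument; your graph-norm refinement and the careful distinction between \( L^2 \)-completeness and density in \( H^2 \cap H^1_0 \) add rigor the paper lacks, but do not constitute a different approach.
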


\begin{proof}
The completeness and orthogonality of the eigenfunctions ensure that any geometric profile compatible with the boundary conditions can be expressed as a (possibly infinite) sum of modes. The quantization arises from the discrete nature of the spectrum of \( \hat{C} \), which is enforced by the compactness of the domain and ellipticity of the operator. Hence, geometry on compact spaces is necessarily discrete in the spectral representation.
\end{proof}

\begin{remark}
This result provides a rigorous foundation for interpreting geometry in spectral terms, with deformation profiles encoded in the sequence \( \{ C_n \} \). Analogous to spectral geometry based on the Laplacian \cite{rosenberg1997laplacian, zelditch2017}, the operator \( \hat{C} \) captures intrinsic geometric content via its eigenstructure.
\end{remark}

\section{Conclusion}

The deformation operator \( \hat{C} := \pi\left(1 + \frac{\hbar^2}{c^2} \frac{d^2}{dv^2} \right) \), defined on the Sobolev domain \( H^2([-v_c, v_c]) \cap H^1_0([-v_c, v_c]) \), admits a purely discrete spectrum under Dirichlet boundary conditions. Its eigenfunctions \( \psi_n(v) \) and eigenvalues \( C_n \) were derived in closed form and shown to constitute a complete orthonormal basis in the Hilbert space \( L^2([-v_c, v_c]) \), enabling exact spectral reconstruction of any admissible deformation profile via the expansion \( C(v) = \sum_{n=0}^\infty \langle C, \psi_n \rangle \psi_n(v) \). The asymptotic behavior \( C_n \sim -\frac{\hbar^2}{c^2} \left(\frac{(n+1)\pi}{2v_c}\right)^2 + \pi \) confirms that the spectrum is unbounded from below and remains strictly less than \( \pi \), with quadratic growth in mode index. We established spectral completeness, orthonormality, and Parseval identity, grounded in classical Sturm--Liouville theory \cite{zettl2005sturm, teschl2014ode}, and formulated the expansion theorem in rigorous functional-analytic terms. Going further, we proved that the spectral representation is injective: no two distinct deformations from \( H^2 \cap H^1_0 \) can produce identical spectral coefficients, and the formal choice \( a_n = \pi \) for all \( n \) yields a contradiction with the boundary conditions, ruling out constant profiles and demonstrating spectral rigidity. The resulting uniqueness of geometry from spectral data was formalized and proved as a corollary. In addition, we showed that if the spectral coefficients converge to \( \pi \) with exponential decay modulated by any inverse polynomial, then the reconstructed deformation \( C(v,\tau) = \sum C_n(\tau) \psi_n(v) \) converges to the constant profile \( \pi \) in the \( C^\infty \)-topology, with convergence rate explicitly controlled by the decay order and regularity of the basis; the result follows from uniform estimates and compact Sobolev embeddings on one-dimensional domains \cite[Thm 4.12]{adams-fournier}. This establishes a complete and self-consistent spectral picture: the sequence \( \{ C_n \} \) encodes the full geometric structure, and each mode \( (\psi_n, C_n) \) represents a localized quantum of geometric deformation with precise analytic meaning. The operator \( \hat{C} \) thus forms a rigorous link between smooth geometry and spectral structure, providing a foundation for higher-dimensional extensions.

\clearpage

\end{document}